\documentclass[a4paper,12pt]{article}
\usepackage{latexsym}
\usepackage{amsmath}
\usepackage{amssymb}
\usepackage{amscd}
\usepackage{graphicx}
\setlength{\textheight}{8.625in} \setlength{\topmargin}{-1cm}
\setlength{\oddsidemargin}{+1mm} \setlength{\textwidth}{6.5in}
\newtheorem{theo}{Theorem}[section]
\newtheorem{lemma}[theo]{Lemma}

\newtheorem{definition}[theo]{Definition}
\newcommand{\EQ}{\begin{equation}}
\newcommand{\EN}{\end{equation}}
\newenvironment{proof}{\indent{\em Proof.  }}{\hfill{$\Box$}\bigskip}

\newcommand{\Aut}{\mbox{\rm Aut}}

\newcommand{\supp}{\mbox{\rm supp}}

\newcommand{\wt}{\mbox{\rm wt}}

\newcommand{\ba}{{\bf a}}

\newcommand{\bh}{{\bf h}}
\newcommand{\bc}{{\bf c}}

\newcommand{\by}{{\bf y}}
\newcommand{\bx}{{\bf x}}

\newcommand{\bv}{{\bf v}}
\newcommand{\bu}{{\bf u}}
\newcommand{\bw}{{\bf w}}
\newcommand{\bo}{{\bf 0}}

\newcommand{\F}{\mathbb{F}}

\newcommand{\CC}{{\mathcal{C}}}




\title{New families of completely transitive
codes and distance transitive graphs\footnote{This work has been
partially supported by the Spanish MICINN grants MTM2009-08435;
the Catalan grant 2009SGR1224 and also by the
Russian fund of fundamental researches 12-01-00905. \newline \indent $^1$Department of Information and Communications Engineering,
 Universitat Aut\`{o}noma de Barcelona,  08193-Bellaterra, Spain
 (email:~josep.rifa@uab.cat, joaquim.borges@uab.cat).\newline \indent $^2$ Institute for Problems of Information Transmission, Russian Academy of Sciences, Bol'shoi Karetnyi per. 19, GSP-4, Moscow, 127994, Russia (e-mail:\, zinov@iitp.ru).}}

\author{J. Borges$^1$, J. Rif\`{a}$^1$, V. A. Zinoviev$^2$}

\begin{document}

\maketitle

\begin{abstract}
In this paper new infinite families of linear binary
completely transitive codes are presented. They have covering
radius $\rho = 3$ and $4$, and are a half part of the binary
Hamming and the binary extended Hamming code of length
$n=2^m-1$ and $2^m$, respectively, where $m$ is even. From these new
completely transitive codes, in the usual
way, i.e., as coset graphs, new presentations of infinite families of
distance transitive coset graphs of
diameter three and four, respectively, are constructed.
\end{abstract}


\section{Introduction}\label{int}

We use the standard notation $[n,k,d]$ for a binary linear code $C$
of length $n$, dimension $k$ and minimum distance $d$ over the
binary field, which will be denoted by $\F$.

The automorphism
group $\Aut(C)$ of $C$ consists of all $n\times n$ binary permutation
matrices $M$, such that $\bc M \in C$ for all $\bc \in C$.  Note
that the automorphism group $\Aut(C)$ coincides with the subgroup
of the symmetric group $S_n$ consisting
of all $n!$ permutations of the $n$ coordinate positions which send
$C$ into itself. $\Aut(C)$ acts in a natural way over the set of cosets
of $C$: $\pi(C+\bv)=C+\pi(\bv)$ for every $\bv\in\F^n$ and $\pi\in\Aut(C)$.

For a code $C$, with minimum distance $d$, denote by
$e = \lfloor (d-1)/2 \rfloor$ its {\em packing radius}. For a vector
$\bx \in \F^n$ denote by $\wt(\bx)$ its Hamming weight (i.e., the number
of its nonzero positions), and by $\wt_i(\bx)$ its weight reduced modulo
$i$, i.e., $\wt_i(\bx) = \wt(\bx) \pmod{i}$, where $i \geq 2$.
For two vectors $\bx=(x_1,\ldots, x_n)$ and $\by = (y_1, \ldots, y_n)$
from $\F^n$ denote by $d(\bx, \by)$ the Hamming distance between $\bx$
and $\by$ (i.e., the number of positions $i$, where $x_i \neq y_i$).

Given any vector $\bv \in \F^n$ its {\em distance} to the code
$C$ is
\[
d(\bv,C)=\min_{\bx \in C}\{ d(\bv, \bx)\}
\]
and the {\em covering radius} of the code $C$ is
\[
\rho=\max_{\bv \in \F^n} \{d(\bv, C)\}.
\]
Clearly $e\leq\rho$ and $C$ is a {\em perfect} code if and only if
$e=\rho$.

For a given binary code $C$ with covering radius
$\rho=\rho(C)$ define
\[
C(i)~=~\{\bx \in \F^n:\;d(\bx,C)=i\},\;\;i=1,2,\ldots,\rho,
\]
and
\[
C_i~=~\{\bc \in C:\;\wt(\bc) = i\},\;\;i=0,1,\ldots, n
\]

Let $J = \{1,2, \ldots, n\}$ be the set of coordinate positions of
vectors from $\F^n$. Denote by $\supp(\bx)$ the {\em support} of
the vector $\bx = (x_1, \ldots, x_n)$, i.e.
\[
\supp(\bx)~=~\{j \in J:~x_j \neq 0\}.
\]
Say that a vector $\bx$ {\em covers} a vector $\by$ if
$\supp(\by)\subseteq \supp(\bx)$.
Two vectors $\bx$ and $\by$ are {\em neighbors}, if $d(\bx, \by) =
1$.

\begin{definition}\label{de:1.1}\cite{del,neu}  A code $C$
with covering radius $\rho=\rho(C)$ is completely regular,
if for all $l\geq 0$ every vector $x \in C(l)$ has the same
number $c_l$ of neighbors in $C(l-1)$ and the same number
$b_l$ of neighbors in $C(l+1)$.
\end{definition}

Also, define $a_l = (q-1){\cdot}n-b_l-c_l$ and note that
$c_0=b_\rho=0$. Define the intersection  array of $C$ as
$(b_0, \ldots, b_{\rho-1}; c_1,\ldots, c_{\rho})$.

\begin{definition}\cite{giu,sole} A binary linear code $C$ with
covering radius $\rho$ is completely transitive if $\Aut(C)$
has $\rho + 1$ orbits when acts on the cosets of $C$.
\end{definition}

Since two cosets in the same orbit should have the same weight
distribution, it is clear that any completely transitive code is
completely regular.



We need some facts on distance regular graphs, where we follow
\cite{bro2}. Let $\Gamma$ be a finite connected simple (i.e.
undirected, without loops and multiple edges) graph. Let
$d(\gamma, \delta)$ be the distance between two vertices
$\gamma$ and $\delta$ (i.e., the number of edges in the minimal
path between $\gamma$ and $\delta$). Denote
\[
\Gamma_i(\gamma) ~=~\{\delta \in \Gamma:~d(\gamma, \delta) = i\}.
\]

Two vertices $\gamma$ and $\delta$ from $\Gamma$ are {\em neighbors}
if $d(\gamma, \delta) = 1$.

An {\em automorphism} of a graph $\Gamma$ is a
permutation $\pi$ of the vertex set of $\Gamma$ such that, for all
$\gamma, \delta \in \Gamma$ we have $d(\gamma,\delta)=1$, if and
only if $d(\pi\gamma,\pi\delta)=1$. Let $\Gamma_i$ be a subgraph
of $\Gamma$ with the same vertices, where an edge $(\gamma, \delta)$
is defined when the vertices $\gamma, \delta$ are at distance $i$
in $\Gamma$. The graph $\Gamma$, with diameter $\rho$, is called {\em primitive} if it is
connected and all $\Gamma_i$ ~($i=1, \ldots, \rho$) are connected, and
{\em imprimitive} otherwise.

\begin{definition}\cite{bro2}\label{14:de:1.3}
A simple connected graph $\Gamma$ is called {\em distance regular},
if it is regular of valency $k$, and if for any two vertices
$\gamma, \delta \in \Gamma$ at distance $i$ apart, there are precisely
$c_i$ neighbors of $\delta$ in $\Gamma_{i-1}(\gamma)$ and $b_i$
neighbors of $\delta$ in $\Gamma_{i+1}(\gamma)$.
Furthermore, this graph is called {\em distance transitive}, if
for any pair of vertices $\gamma, \delta$ at distance
$d(\gamma, \delta)$ there is an automorphism $\pi$ from
$\mbox{\rm Aut}(\Gamma)$ which move this pair to any other given
pair $\gamma', \delta'$  of vertices at the same distance
$d(\gamma, \delta) = d(\gamma', \delta')$.
\end{definition}

The sequence ${\bf{I}}(\Gamma) = (b_0, b_1, \ldots, b_{\rho-1};
c_1, c_2, \ldots, c_\rho)$, where $\rho$ is the diameter of $\Gamma$,
is called the {\em intersection array} of $\Gamma$. The numbers
$c_i, b_i$, and $a_i$, where $a_i=k- b_i - c_i$, are called
{\em intersection numbers}. Clearly $b_0 = k,~~b_\rho = c_0 = 0,~~c_1 = 1.$

Let $C$ be a linear completely regular code with covering radius $\rho$ and
intersection array  $(b_0, \ldots , b_{\rho-1}; c_1, \ldots c_{\rho})$.
Let $\{D\}$ be a set of cosets of $C$. Define the graph $\Gamma_C$, which is
called the {\em coset graph of $C$}, taking all different cosets $D = C+ {\bf x}$ as
vertices, with two vertices $\gamma = \gamma(D)$ and $\gamma' = \gamma(D')$
adjacent, if and only if the cosets $D$ and $D'$ contain neighbor vectors,
i.e., there are ${\bf v} \in D$ and ${\bf v}' \in D'$ such that $d({\bf v}, {\bf v}') = 1$.

\begin{lemma}\cite{bro2,ripu}\label{lem:2.5}
Let $C$ be a linear completely regular code with covering radius $\rho$ and
intersection array  $(b_0, \ldots , b_{\rho-1}; c_1, \ldots c_{\rho})$
and let $\Gamma_C$ be the coset graph of $C$. Then $\Gamma_C$ is
distance regular of diameter $\rho$ with the same intersection array.
If $C$ is completely transitive, then $\Gamma_C$ is distance transitive.
\end{lemma}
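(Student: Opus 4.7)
The plan is to pin down the exact relationship between the graph distance in $\Gamma_C$ and the code distance, namely that the graph distance between cosets $C + \bx$ and $C + \by$ equals $d(\bx - \by, C)$; once this is established, both claims follow by combining the completely regular (respectively completely transitive) hypothesis on $C$ with the action of translations and of $\Aut(C)$ on the set of cosets. To verify the identity, note that by construction each edge of $\Gamma_C$ connects $C + \bw$ to $C + \bw + e_j$ for some $j \in J$, so a walk of length $k$ from $C + \by$ to $C + \bx$ exhibits $\bx - \by$ modulo $C$ as a sum of $k$ standard basis vectors; working in characteristic two, the minimum such $k$ equals the minimum weight in the coset $C + \bx - \by$, which is $d(\bx - \by, C)$. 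In particular the diameter of $\Gamma_C$ is $\rho$, and every translation $\tau_\bz : C + \bw \mapsto C + \bw + \bz$ preserves adjacency, hence is a graph automorphism.

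For distance regularity, I would fix vertices $\gamma = C + \bx$ and $\delta = C + \by$ at graph distance $i$ and apply $\tau_{-\by}$ to reduce to counting neighbours of $C + \bv$ at graph distance $i - 1$ and $i + 1$ from $C$, where $\bv = \bx - \by$ is chosen as a minimum-weight coset representative, so $\bv \in C(i)$. The neighbours of $C + \bv$ are exactly the cosets $C + \bv + e_j$ for $j \in J$, each at graph distance $d(\bv + e_j, C) \in \{i - 1, i, i + 1\}$ from $C$ by the previous paragraph. Complete regularity of $C$ guarantees that precisely $c_i$ of the vectors $\bv + e_j$ lie in $C(i - 1)$ and precisely $b_i$ lie in $C(i + 1)$, independently of the choice of $\bv \in C(i)$; provided that $C$ has minimum distance at least three—the setting relevant to this paper—distinct $j$'s give distinct cosets, so these code-theoretic counts transfer verbatim to $\Gamma_C$ and yield the stated intersection array.

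For the last claim, assume $C$ is completely transitive, so that the orbits of $\Aut(C)$ on cosets are exactly $C, C(1), \ldots, C(\rho)$. Given two pairs of vertices $(\gamma_1, \delta_1)$ and $(\gamma_2, \delta_2)$ at graph distance $i$, use $\tau_{-\delta_1}$ and $\tau_{-\delta_2}$ to reduce to $\delta_1 = \delta_2 = C$, in which case $\gamma_1, \gamma_2$ both lie in $C(i)$; complete transitivity then supplies $\pi \in \Aut(C)$ sending $\gamma_1$ to $\gamma_2$, and $\pi$ automatically fixes $C$. Composition of $\pi$ with the two translations is the desired graph automorphism of $\Gamma_C$ sending the first pair to the second. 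The one technical point to watch is the passage from counts of vectors in $C(i \pm 1)$ to counts of distinct neighbouring cosets in $\Gamma_C$; this is the only place where the argument uses anything beyond the formal identification of graph distance with coset distance and the fact that translations and $\Aut(C)$ act compatibly with adjacency.
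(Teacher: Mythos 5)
The paper does not prove this lemma; it is quoted from the cited references (Brouwer--Cohen--Neumaier and Rif\`a--Pujol), so there is no in-paper argument to compare against. Your proof is correct and is essentially the standard one from those sources: identify the graph distance between $C+\bx$ and $C+\by$ with the minimum weight of the coset $C+\bx-\by$, use translations to reduce to counting neighbours of a coset relative to the zero coset (where complete regularity gives the intersection numbers), and combine translations with $\Aut(C)$ to get distance transitivity from complete transitivity. You also correctly flag the one point usually glossed over, namely that minimum distance at least $3$ is needed so that the $n$ cosets $C+\bv+e_j$ are distinct and the vector counts transfer to vertex counts.
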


Completely regular and completely transitive codes is a classical
subject in algebraic coding theory, which is closely connected with
graph theory and algebraic combinatorics. Existence and enumeration
of all such codes are  open hard problems (see \cite{bro2,del,neu}
and references there). Even for the case $q\,n \leq 48$ (suggested
by Neumaier \cite{neu}) these problems are quite far to be completed.

Perfect codes and, in particular, Hamming codes, induce many
different families of completely regular codes with fixed or growing
covering radius (see  \cite{bor1,bor2,bor3,rif1,rif2} and references
there). In a recent paper \cite{bor3} we described completely regular
codes which are halves of a binary Hamming code, obtained by adding
one row to the parity check matrix of a Hamming code. As a result
we obtained three new infinite families of linear binary completely
regular codes with covering radius $\rho = 3$ and $4$.

The purpose of this paper is to prove that all completely
regular codes constructed in \cite{bor3} are completely transitive. This is proved in Section \ref{sec:CTC}.
In the usual way, i.e., as coset graphs, we will see in Section \ref{sec:DTG} that new infinite families of
completely transitive codes induce new presentations of infinite families of
distance transitive coset graphs of diameter three and four.

\section{Completely transitive codes}\label{sec:CTC}

Let $H_m$ denote the parity check matrix of the Hamming code
$\mathcal{H}_m$ of length $n=2^m-1$, where the column $\bh_i$ of
$H_m$ is the binary representation of $\alpha^i$, $i=0,1,...,n-1$,
through the polynomial base  $\alpha^0, \alpha^1,..., \alpha^{n-1}$. For a
given even $m \geq 4$ and any  $i_1, i_2 \in \{0,1,2,3\}$, where
$i_1 \neq i_2$, denote
by $\bv_{i_1,i_2} = (v_0,v_1, ..., v_{n-1})$ the binary vector
whose $i$-th position $v_i$ is a function of the weight
of the column $\bh_i$:
\begin{equation}\label{vi1i2}
v_i~=~\left\{
\begin{array}{ccc}
1, ~~&\mbox{if}&~~\wt(\bh_i) \equiv i_1~\mbox{or}~ i_2 \pmod{4},\\
0, ~~&  &  ~~\mbox{otherwise}.
\end{array}
\right.
\end{equation}

Vector $\bv_{i_1,i_2}$ can be seen as a boolean function  $f_{i_1,i_2}$ over $\F_2^{m}\backslash \{\bo\}$, where:
$$
f_{i_1,i_2}(\bx)=~\left\{
\begin{array}{ccc}
1, ~~&\mbox{if}&~~\wt(\bx) \equiv i_1~\mbox{or}~ i_2 \pmod{4}\\
0, ~~&  &  ~~\mbox{otherwise}.
\end{array}
\right.
$$

It is well known~\cite{macw} that the automorphism group $\Aut(\mathcal{H}_m)$
of the Hamming code (i.e., the set of all permutations of
coordinates which fix the code) is isomorphic to the general linear group
$GL(m,2)$ of all the $m\times m$ nonsingular matrices over $GF(2)$. This group $\Aut(\mathcal{H}_m)$ acts 2-transitively over the set
of coordinate positions $J$ (or columns of $H_m$) and has more
powerful transitivity properties. It is well known, for example, that given any pair of
ordered sets of $m$  positions (corresponding to independent column vectors
in $H_m$), there exists a permutation in $\Aut(\mathcal{H}_m)$ moving one set to
the other one.

\begin{theo}\label{theo:3.0}
Assume that $i_1 - i_2 \equiv 1 \pmod{2}$ and let $H_m$ be the parity
check matrix of the Hamming code
$\mathcal{H}_m$ of length $n=2^m-1$, where $m$ is even, and let
$H_m(\bv_{i_1,i_2})$ be obtained from $H_m$ by adding one more row
$\bv_{i_1,i_2}$ given by (\ref{vi1i2}). Let $\mathcal{C}=\mathcal{C}_{i_1,i_2}$ be the
$[n, n - m - 1, 3]$ code with parity check matrix
$H_m(\bv_{i_1,i_2})$. Then:

The group $\Aut(\CC)$ coincides with the symplectic group $Sp(m,2)$ and its order is:
\[
|\Aut(\CC)|= (2^m-1){\cdot}(2^{m-1}){\cdot}(2^{m-2}-1){\cdot}(2^{m-3})\cdots (2^2-1){\cdot}(2).
\]
\end{theo}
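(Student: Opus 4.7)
The plan is to realise the added row $\bv_{i_1,i_2}$ as (the restriction to $\F_2^m\setminus\{\bo\}$ of) a quadratic Boolean function on $\F_2^m$, and then to identify $\Aut(\CC)$ with the stabiliser of its polarisation form. First I would note that via the identification $i\leftrightarrow\bh_i$, the function $f_{i_1,i_2}$ of the excerpt decomposes, using $\sum_{i<j}x_ix_j\equiv\binom{\wt(\bx)}{2}\pmod 2$, as
\[
f_{i_1,i_2}(\bx)=\epsilon+\ell(\bx)+q(\bx),
\]
with $\epsilon\in\F_2$, $\ell$ either $0$ or $\sigma_1(\bx)=x_1+\cdots+x_m$, and quadratic part always $q(\bx)=\sum_{i<j}x_ix_j$. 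This $q$ has polarisation $B(\bx,\by)=q(\bx+\by)+q(\bx)+q(\by)=\sigma_1(\bx)\sigma_1(\by)+\la\bx,\by\ra$, which is alternating and has trivial radical when $m$ is even, hence is a nondegenerate symplectic form on $\F_2^m$.

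Next I would establish the inclusion $\Aut(\CC)\subseteq\Aut(\mathcal{H}_m)=GL(m,2)$. The dual $\CC^\perp=\la\mathcal{H}_m^\perp,\bv_{i_1,i_2}\ra$ contains the simplex code $\mathcal{H}_m^\perp$ as a subcode of index $2$; all nonzero simplex codewords have weight $2^{m-1}$, whereas each vector of the coset $\bv_{i_1,i_2}+\mathcal{H}_m^\perp$, written as a function on $\F_2^m$, has the form $\epsilon+\ell'(\bx)+q(\bx)$. The classical count of zeros of a nondegenerate quadratic form in characteristic $2$ yields its weight on $\F_2^m$ as $2^{m-1}\pm 2^{m/2-1}$, hence a value distinct from $2^{m-1}$ even after removing the coordinate at $\bo$. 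Consequently $\mathcal{H}_m^\perp$ coincides with the set of weight-$2^{m-1}$ codewords of $\CC^\perp$ together with $\bo$, so it is stabilised by $\Aut(\CC)=\Aut(\CC^\perp)$, which gives the inclusion.

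For $\pi\in GL(m,2)$ the condition $\pi\in\Aut(\CC)$ reduces to $\pi(\bv_{i_1,i_2})\in\bv_{i_1,i_2}+\mathcal{H}_m^\perp$, i.e.\ that $f_{i_1,i_2}(\pi\bx)+f_{i_1,i_2}(\bx)$ be an $\F_2$-linear function of $\bx$. Since the constant and linear parts of $f_{i_1,i_2}$ contribute only linearly, this collapses to $q(\pi\bx)+q(\bx)$ being linear; by taking polarisations this is equivalent to $B(\pi\bx,\pi\by)=B(\bx,\by)$ for all $\bx,\by$, i.e.\ to $\pi\in Sp(m,2)$. The stated order is then the standard $|Sp(2\ell,2)|=2^{\ell^2}\prod_{i=1}^{\ell}(4^i-1)$ written out with $m=2\ell$. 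The main obstacle is the inclusion $\Aut(\CC)\subseteq GL(m,2)$: it relies on the weight-distribution analysis of the coset $\bv_{i_1,i_2}+\mathcal{H}_m^\perp$ via the characteristic-$2$ weight formula for quadratic forms; once that inclusion is secured, passing from the stabiliser condition on $q$ to that on $B$ is immediate.
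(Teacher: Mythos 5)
Your proposal is correct, and its backbone is the same as the paper's: identify $f_{i_1,i_2}$ with a quadratic Boolean function, polarise it to a nondegenerate symplectic form $B$ on $\F^m$ ($m$ even), and recognise $\Aut(\CC)$ as the stabiliser of $B$ in $GL(m,2)$, i.e.\ $Sp(m,2)$, whose order is the displayed product. Where you differ is in how the two supporting steps are handled. First, you derive the decomposition $f_{i_1,i_2}=\epsilon+\ell+q$ with $q(\bx)=\sum_{i<j}x_ix_j$ directly from $\binom{\wt(\bx)}{2}\bmod 2$; the paper instead quotes this (in matrix form, $\bx Q\bx^T$ with $Q$ upper triangular all-one) from \cite[Th.~2.2]{bent}. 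Second, and more substantially, you prove the inclusion $\Aut(\CC)\subseteq\Aut(\mathcal{H}_m)=GL(m,2)$ by an intrinsic weight argument: the simplex subcode $\mathcal{H}_m^\perp$ is characterised inside $\CC^\perp$ as the weight-$2^{m-1}$ words together with $\bo$, because every word of the coset $\bv_{i_1,i_2}+\mathcal{H}_m^\perp$ has a nondegenerate quadratic part and hence weight $2^{m-1}\pm 2^{m/2-1}$ (this is exactly the content of the paper's Lemma~\ref{weights}, which the paper does not invoke at this point). The paper instead appeals to MacWilliams' theorem \cite{mac61} that two parity check matrices of the same code are related by a nonsingular linear transformation, and then passes somewhat tersely from an $(m+1)\times(m+1)$ transformation to the $m\times m$ matrix $K$ preserving $\mathcal{B}$; your argument supplies precisely the justification (invariance of the simplex subcode, hence of the coset of $\bv_{i_1,i_2}$, hence of the quadratic part up to linear terms) that makes that reduction legitimate. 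The final equivalence --- $q\circ\pi+q$ linear iff $B(\pi\bx,\pi\by)=B(\bx,\by)$ --- and the order computation $2^{\ell^2}\prod_{i=1}^{\ell}(4^i-1)$ with $m=2\ell$ both check out. In short, the paper's route is shorter because it leans on two citations; yours is self-contained and closes the one step the paper leaves implicit.
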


\begin{proof}
In~\cite[Th. 2.2]{bent} it was proved that for any even $m$, $m\geq 4$, the function $f_{i_1,i_2}$ is quadratic for $i_1 - i_2 \equiv 1 \pmod{2}$. In these cases we have:
\begin{equation}\label{theo:2.2}
\begin{split}
f_{2,3}(\bx) &= \bx Q \bx^T, \\
f_{1,2}(\bx) &= \bx Q \bx^T + L\bx^T,\\
f_{0,1}(\bx) &= \bx Q \bx^T + \epsilon,\\
f_{0,3}(\bx) &= \bx Q \bx^T + L\bx^T +\epsilon,
\end{split}
\end{equation}
where $Q$ is the all-one upper triangular binary $m\times m$ matrix with zeroes in
the diagonal, $L$ is the all-one binary vector of length $m$, $\epsilon = 1$ and $\bx\in \F^m$.

Associated to $f_{i_1,i_2}$ there is a symplectic form~\cite[Ch. 15. \S 2]{macw} defined by:
\begin{equation}
{\cal B}(\bu,\bv) = f_{i_1,i_2}(\bu+\bv) +f_{i_1,i_2}(\bu)+f_{i_1,i_2}(\bv)+\epsilon,
\end{equation}
where $\bu,\bv\in \F^m$ and $\epsilon = 1$ or $\epsilon = 0$ when $0\in \{i_1,i_2\}$ or $0\notin \{i_1,i_2\}$, respectively.

From~\cite{macw} we know that, $\Aut({\mathcal C}) = \Aut({\mathcal C}^\perp)$
and so, a permutation of the coordinate positions (so, the elements in $J$)
represented by the $n\times n$ matrix $P$ is in $\Aut({\mathcal C})$ if and
only if $H_m(\bv_{i_1,i_2})P$ is again a parity check matrix for code
${\mathcal C}$.  Moreover, following~\cite{mac61}, the above condition happens
if and only if  $H_m(\bv_{i_1,i_2})$ and  $H_m(\bv_{i_1,i_2})P$ are related by
a linear transformation of coordinates, which we denote by $K$. This is the
key point. This means that finding the automorphism group  $\Aut({\mathcal C})$
is reduced to find all the nonsingular $m\times m$ matrices $K$ preserving the symplectic form ${\cal B}$. So, such that ${\cal B}(K\bu,K\bv)={\cal B}(\bu,\bv)$. Hence, the automorphism group $\Aut(C)$ is isomorphic to the symplectic group $Sp(m,2)$. The order of this symplectic group is a well known result. This proves the statement.
\end{proof}

Now, we are ready to prove that the group $\Aut({\mathcal C})$ acts
transitively over ${\mathcal C}(i)$, for any $i\in \{0,1,2,3\}$.

\begin{theo}\label{theo:3.1}
Let $m$ be even, let $i_1 -i_2 = 1 \pmod 2$, let $H_m$ be the parity check
matrix of the Hamming code $\mathcal{H}_m$ of length $n=2^m-1$, and let
$H_m(\bv_{i_1,i_2})$ be
obtained from $H_m$ by adding one more row $\bv_{i_1,i_2}$.
Let $\mathcal{C}=\mathcal{C}_{i_1,i_2}$ be the $[n, n - m - 1, 3]$ code
with parity check matrix
$H_m(\bv_{i_1,i_2})$. Then, $\mathcal{C}$ is a completely transitive code.
\end{theo}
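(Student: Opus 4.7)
The plan is to use the identification $\Aut(\mathcal{C}) \cong Sp(m,2)$ from Theorem 3.0 together with an explicit description of the syndromes of cosets at each distance, and then appeal to the classical transitivity of $Sp(m,2)$ on $\F^m\setminus\{\bo\}$. Since $\mathcal{C}$ has parity check matrix $H_m(\bv_{i_1,i_2})$ of size $(m+1)\times n$, cosets of $\mathcal{C}$ correspond bijectively to syndromes in $\F^{m+1}$; the induced action of $\Aut(\mathcal{C})$ on syndromes restricts, in the first $m$ coordinates, to the standard action $\bh\mapsto K\bh$ of the symplectic group on the columns of $H_m$. The only computational ingredient I would use is the symplectic identity $f_{i_1,i_2}(\bh_1+\bh_2) = f_{i_1,i_2}(\bh_1) + f_{i_1,i_2}(\bh_2) + \mathcal{B}(\bh_1,\bh_2) + \epsilon$ from the proof of Theorem 3.0, together with the covering radius $\rho = 3$ established in \cite{bor3}.

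Applying this identity to minimum-weight coset leaders pins down the syndromes at each distance. The distance-$1$ syndromes are $\{(\bh, f_{i_1,i_2}(\bh)) : \bh \in \F^m\setminus\{\bo\}\}$, in bijection with the columns of $H_m$. A weight-$2$ coset leader with support $\{j_1,j_2\}$ yields the syndrome $\bigl(\bh_{j_1}+\bh_{j_2},\; f_{i_1,i_2}(\bh_{j_1}+\bh_{j_2})+\mathcal{B}(\bh_{j_1},\bh_{j_2})+\epsilon\bigr)$, which differs from every distance-$1$ syndrome precisely when $\mathcal{B}(\bh_{j_1},\bh_{j_2})=1+\epsilon$. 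I would verify that every nonzero $\bh\in\F^m$ admits such a pair with $\bh_{j_1}+\bh_{j_2}=\bh$: the alternating property of $\mathcal{B}$ reduces this to finding $\bh_{j_1}\in\F^m\setminus\{\bo,\bh\}$ with $\mathcal{B}(\bh_{j_1},\bh)=1+\epsilon$, a non-trivial linear condition whose solution set has size $2^{m-1}$, hence non-empty for $m\ge 4$. Consequently the distance-$2$ syndromes are exactly $\{(\bh, f_{i_1,i_2}(\bh)+1) : \bh \in \F^m\setminus\{\bo\}\}$. Counting $1+(2^m-1)+(2^m-1)=2^{m+1}-1$ against the total $|\F^n/\mathcal{C}|=2^{m+1}$ leaves a single remaining coset, necessarily at distance $3$ and with syndrome $(\bo,1)$.

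With this parameterization, complete transitivity follows at once: the distance-$0$ and distance-$3$ classes are singletons, and at distances $1$ and $2$ the cosets are parameterized by $\bh\in\F^m\setminus\{\bo\}$, on which $Sp(m,2)$ acts transitively. Hence $\Aut(\mathcal{C})$ has exactly $\rho+1=4$ orbits on the cosets of $\mathcal{C}$, which is the definition of complete transitivity. The main delicate step is the existence assertion underlying the distance-$2$ enumeration, which is where the hypothesis $m\ge 4$ is used; everything else is a direct application of the symplectic identity combined with the orbit-counting argument sketched above.
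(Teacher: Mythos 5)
Your proof is correct, and it shares the paper's overall skeleton --- invoke Theorem~\ref{theo:3.0} to identify $\Aut(\CC)$ with $Sp(m,2)$, observe that the weight-$0$ and weight-$3$ cosets are singleton orbits, and establish transitivity separately on the weight-$1$ and weight-$2$ cosets --- but your handling of the weight-$2$ cosets takes a genuinely different route. The paper works with coset representatives: a weight-$2$ leader with support $\{j_1,j_2\}$ satisfies ${\cal B}(\bh_{j_1},\bh_{j_2})\neq\epsilon$, and the paper then appeals to the transitivity of $Sp(m,2)$ on \emph{pairs} of vectors with a prescribed value of the symplectic form (in essence Witt's theorem). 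You instead parameterize each weight-$2$ coset injectively by its syndrome $(\bh,\,f_{i_1,i_2}(\bh)+1)$ --- injectively because the only other syndrome over a given nonzero $\bh$ is the column $(\bh,\,f_{i_1,i_2}(\bh))$ of $H_m(\bv_{i_1,i_2})$, which is a weight-$1$ syndrome --- so that transitivity on weight-$2$ cosets reduces to the weaker, classical fact that $Sp(m,2)$ is transitive on $\F^m\setminus\{\bo\}$, the same fact both proofs already use for the weight-$1$ cosets. This buys a more elementary group-theoretic input and, as a bonus, a complete enumeration of all $2^{m+1}$ syndromes identifying $(\bo,1)$ as the unique weight-$3$ coset. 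Two small remarks: your ``delicate'' realizability step (every nonzero $\bh$ arises as a weight-$2$ syndrome) is not actually needed for transitivity, since the set of realized $\bh$'s is $Sp(m,2)$-invariant and hence empty or all of $\F^m\setminus\{\bo\}$, though it is needed for your coset count; and the assertion that the induced action on the first $m$ syndrome coordinates is $\bh\mapsto K\bh$ with $K\in Sp(m,2)$ is exactly what the proof of Theorem~\ref{theo:3.0} supplies, so relying on it is legitimate.
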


\begin{proof}
It is known from~\cite{bor3} that $\CC$ is a completely regular code with covering radius 3. Now, we have to prove that under action of $\Aut({\mathcal C})$ all
cosets of $\mathcal{C}$ are partitioned into $4$ orbits. Since
there is only one coset of weight $3$ we have to consider only
cosets of weights $1$ and $2$.

Consider cosets of weight $1$. It is clear from the construction of $\Aut(\CC)$ that there exists an automorphism swapping any two columns in $H_m$ and so, moving a coset of weight 1 to any other one.

Consider cosets of weight $2$, say, $D={\mathcal C}+\bx$,
where $\wt(\bx)=2$. Let $\supp(\bx)=\{j_1,j_2\}$. Since
$\bx$ is not covered by codewords from ${\mathcal C}_3$,
we conclude that $f_{i_1,i_2}(\bh_{j_1}+\bh_{j_2})\not= f_{i_1,i_2}(\bh_{j_1})+f_{i_1,i_2}(\bh_{j_2})$ and so, ${\cal B}(\bh_{j_1},\bh_{j_2}) \not= \epsilon$.
But, as $\Aut(\CC)$ is constructed, any pair of columns $\bh_{j_1},\bh_{j_2}$ such that ${\cal B}(\bh_{j_1},\bh_{j_2}) \not= \epsilon$ can be moved to any
other pair, with the same property, by some element in $\Aut({\mathcal C})$. Therefore any coset of weight $2$
can be moved by the action of $\Aut({\mathcal C})$ to any other
coset of weight $2$.
\end{proof}

As a consequence, we have the following result, which
strengthen the corresponding result in \cite{bor3}.

\begin{theo}\label{theo:3.2}
Let $m$ be even, let $H_m$ be the parity check matrix of
the Hamming code $\mathcal{H}_m$ of length $n=2^m-1$, and
let $H_m(\bv_{i_1,i_2})$ be obtained from $H_m$ by adding
one more row $\bv_{i_1,i_2}$. Let $\mathcal{C}=\mathcal{C}_{i_1,i_2}$
be the $[n,n-m-1,3]$ code with parity check matrix $H_m(\bv_{i_1,i_2})$.
\begin{itemize}
\item If $\{i_1,i_2\} = \{0,1\}$ or $\{0,3\}$, then
$\mathcal{C}$ is a non antipodal completely transitive code
with covering radius $\rho=3$ and intersection array
$(n, (n-3)/2, 1; 1, (n-3)/2, n)$.
\item If $\{i_1,i_2\} = \{1,2\}$ or $\{2,3\}$, then $\mathcal{C}$
is an antipodal completely transitive code
with covering radius $\rho=3$ and intersection array
$(n, (n+1)/2, 1; 1, (n+1)/2, n)$.
\item If $\{i_1,i_2\} = \{0,2\}$, then $\mathcal{C}$
is an even part of the Hamming code, i.e., a completely transitive
$[n,k-1,4]$ code with covering radius $\rho=3$.
\item If $\{i_1,i_2\} = \{1,3\}$, then $\mathcal{C}$
is the Hamming code $\mathcal{H}_m$.
\end{itemize}
\end{theo}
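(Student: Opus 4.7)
The plan is to dispatch the four items by combining Theorem~\ref{theo:3.1} with two short structural observations that cover the two cases the theorem does not. The parameters $[n,n-m-1,3]$, the covering radius $\rho=3$, the intersection arrays, and the antipodal versus non-antipodal distinction are all established in~\cite{bor3}; the task here is to upgrade complete regularity to complete transitivity, and to identify $\CC_{0,2}$ and $\CC_{1,3}$ explicitly. For the four pairs in items~1 and~2, namely $\{0,1\},\{0,3\},\{1,2\},\{2,3\}$, the difference $i_1-i_2$ is odd, so Theorem~\ref{theo:3.1} applies directly and yields complete transitivity; combined with the data from~\cite{bor3}, these two items follow at once.

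For $\{i_1,i_2\}=\{1,3\}$ (item~4), I would show that the adjoined row is redundant. By construction, the $i$-th coordinate of $\bv_{1,3}$ is $1$ exactly when $\wt(\bh_i)$ is odd, which coincides with $\sum_{j=1}^m(\bh_i)_j\pmod 2$; hence $\bv_{1,3}$ equals the sum of the $m$ rows of $H_m$. Therefore $H_m(\bv_{1,3})$ has the same row space as $H_m$, so $\CC_{1,3}=\mathcal{H}_m$, which is itself completely transitive since $\Aut(\mathcal{H}_m)\cong GL(m,2)$ acts transitively on the $n$ nonzero syndromes.

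For $\{i_1,i_2\}=\{0,2\}$ (item~3), the vectors $\bv_{0,2}$ and $\bv_{1,3}$ are indicator vectors of complementary residue classes modulo~$4$ that together exhaust the nonzero column weights, so $\bv_{0,2}=\mathbf{1}+\bv_{1,3}$. By the observation of the previous paragraph, $H_m(\bv_{0,2})$ is then row-equivalent to the matrix obtained from $H_m$ by adjoining the all-one row, whose null space is the even-weight subcode of $\mathcal{H}_m$, an $[n,n-m-1,4]$ code; the value $\rho=3$ is the one established in~\cite{bor3}. For complete transitivity I would argue that $\Aut(\CC_{0,2})\supseteq\Aut(\mathcal{H}_m)\cong GL(m,2)$, since coordinate permutations preserve both the Hamming parity checks and the overall parity, and that this subgroup already acts $2$-transitively on the $n$ positions. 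The four orbits on cosets (of weights $0,1,2,3$) are then single orbits: weight-$1$ cosets by transitivity on positions, weight-$2$ cosets by $2$-transitivity on unordered pairs, and the weight-$3$ class is the single coset of odd-weight Hamming codewords.

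The main obstacle is primarily bookkeeping: one must verify that the minimum weight of a coset representative classifies the cosets of $\CC_{0,2}$ exactly into the four orbits required by complete transitivity, and match the transitivity of $GL(m,2)$ on positions with the combinatorial parametrisation of each weight class. Once the row-space identifications, namely that $\bv_{1,3}$ is the sum of the rows of $H_m$ and that $\bv_{0,2}=\mathbf{1}+\bv_{1,3}$, are in place, the remaining verifications require no real calculation.
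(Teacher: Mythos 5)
Your proposal is correct and follows essentially the same route as the paper: items 1 and 2 by invoking Theorem~\ref{theo:3.1} together with the data from \cite{bor3}, and items 3 and 4 by identifying $\bv_{1,3}$ as the sum of the rows of $H_m$ (equivalently, the paper's observation that all rows of $H_m(\bv_{i_1,i_2})$ sum to $\mathbf{0}$ or to the all-one vector), so that $\mathcal{C}_{1,3}=\mathcal{H}_m$ and $\mathcal{C}_{0,2}$ is the even-weight subcode. The only divergence is in the $\{0,2\}$ case, where the paper simply cites \cite{bor1} for complete transitivity while you prove it directly from the transitivity of $GL(m,2)$ on nonzero syndromes and on pairs of positions; your orbit count ($1+n+n+1=2^{m+1}$ cosets of weights $0,1,2,3$) is accurate and makes that step self-contained.
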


\begin{proof}
For the first two cases we have $i_1-i_2 \equiv 1 \pmod{2}$.
Hence, by Theorem \ref{theo:3.1}, these codes are completely
transitive and completely regular.

For the case $\{i_1,i_2\} = \{0,2\}$, note that adding all the rows of
$H_m(\bv_{i_1,i_2})$ gives the all-one vector. Therefore ${\mathcal C}$
is the set of codewords of even weight of the Hamming code. Such code is
completely regular and completely transitive \cite{bor1}.

Finally, if $\{i_1,i_2\} = \{1,3\}$, then adding all the rows of
$H_m(\bv_{i_1,i_2})$ gives the all-zero vector, i.e., we are leaving
${\mathcal C}$ without changes.
\end{proof}

Consider the extended codes from the ones obtained above.
We give one
lemma from \cite{bor3}, about dual weights of codes
$\mathcal{C}^*_{i_1,i_2}$. By ${\cal H}^*_m$ we denote an
extended Hamming code of length $2^m$ and by $v_{i_1,i_2}^*$ the extended vector of $v_{i_1,i_2}$.

\begin{lemma}\label{weights}\cite{bor3}
Let $m$ be even. The weight distribution of the coset
$\bv^*_{i_1,i_2} + ({\cal H}^*_m)^{\perp}$ is:
\begin{itemize}
\item $2^{m-1}\pm~2^{\frac{m}{2}-1}$, when $i_1-i_2=1 \pmod 2$.
\item $\{0,2^{m-1}\}$, when $\{i_1,i_2\}=\{1,3\}$.
\item $\{2^{m-1},2^m\}$, when $\{i_1,i_2\}=\{0,2\}$.
\end{itemize}
\end{lemma}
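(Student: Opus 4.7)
The plan is to identify the $2^m$ coordinates of $\F^{2^m}$ with the points of $\F^m$ (the extension coordinate of the extended Hamming code corresponding to the vector $\bo$), so that $\bv^*_{i_1,i_2}$ becomes the evaluation vector of the Boolean function $f_{i_1,i_2}$ on the whole of $\F^m$ and $({\cal H}^*_m)^{\perp}$ is identified with the code of evaluations of all linear forms $L\bx^T$, $L\in\F^m$. Under this identification, the weight distribution of the coset is determined by the algebraic normal form of $f_{i_1,i_2}$.

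I would first dispose of the two affine cases. For $\{i_1,i_2\}=\{1,3\}$ the condition $\wt(\bx)\equiv 1$ or $3 \pmod 4$ is equivalent to $\wt(\bx)$ being odd, so $f_{1,3}(\bx)=x_1+\cdots+x_m$ is linear; hence $\bv^*_{1,3}\in ({\cal H}^*_m)^{\perp}$, the coset equals $({\cal H}^*_m)^{\perp}$ itself, and its weight distribution $\{0,2^{m-1}\}$ is the classical one for the (extended) simplex code. For $\{i_1,i_2\}=\{0,2\}$, the function $f_{0,2}(\bx)=1+x_1+\cdots+x_m$ is affine but not linear, so $\bv^*_{0,2}$ lies in the coset of the all-one vector $\mathbf{1}$, and complementing the previous weights yields $\{2^{m-1},2^m\}$.

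The main case is $i_1-i_2\equiv 1 \pmod 2$. By the formulas (\ref{theo:2.2}) of Theorem \ref{theo:3.0}, $f_{i_1,i_2}(\bx)=\bx Q\bx^T+L\bx^T+\epsilon$, so that modulo $({\cal H}^*_m)^{\perp}$ (together, if necessary, with $\mathbf{1}$) the coset representative is the pure quadratic form $\bx Q\bx^T$. I would then invoke the standard description of cosets of the (extended) dual Hamming code whose leader is a quadratic form (see \cite[Ch.~15]{macw}): such a coset contains exactly two weights, $2^{m-1}\pm 2^{m-h-1}$, where $2h$ is the rank of the associated symplectic form ${\cal B}(\bu,\bv)=\bu(Q+Q^T)\bv^T$. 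The key calculation is that $Q+Q^T=J-I$ (with $J$ the all-one $m\times m$ matrix) has $\det(J-I)=(m-1)(-1)^{m-1}$, which is odd for even $m$, so the symplectic form is non-degenerate of full rank $m$; thus $2h=m$ and the two coset weights are $2^{m-1}\pm 2^{m/2-1}$.

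The main obstacle is essentially bookkeeping: first, pinning down the convention for the parity extension so that $\bv^*_{i_1,i_2}$ really coincides with the evaluation of $f_{i_1,i_2}$ on all of $\F^m$ (and in particular so that the evaluation of a linear form lies in $({\cal H}^*_m)^{\perp}$ itself); second, tracking the roles of the linear summand $L\bx^T$ and the constant $\epsilon$ in (\ref{theo:2.2}) to confirm that they shift the representative only within $({\cal H}^*_m)^{\perp}$ or by $\mathbf{1}$. The latter is harmless because the pair $\{2^{m-1}-2^{m/2-1},\,2^{m-1}+2^{m/2-1}\}$ is stable under complementation $w\mapsto 2^m-w$. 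Once these bookkeeping details are settled, the lemma reduces to the rank computation for $Q+Q^T$ modulo $2$.
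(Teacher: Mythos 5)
Your argument is correct, and it is worth noting that the paper itself offers no proof to compare against: Lemma~\ref{weights} is simply imported from \cite{bor3}. What you have written is a sound self-contained derivation, and it meshes exactly with the machinery the paper does set up, namely the quadratic forms in (\ref{theo:2.2}) and the associated symplectic form ${\cal B}(\bu,\bv)=\bu(Q+Q^T)\bv^T$. Your key computation checks out: $Q+Q^T=J-I$ has integer determinant $(-1)^{m-1}(m-1)$, which is odd for even $m$, so the form is nondegenerate of rank $m$, the function $f_{i_1,i_2}$ is bent (this is precisely the content of the cited reference \cite{bent}), and the two-weight conclusion $2^{m-1}\pm 2^{m/2-1}$ follows from the standard coset-weight formula $2^{m-1}\pm 2^{m-h-1}$ of \cite[Ch.~15]{macw} with $2h=m$. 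Your observations that the evaluation vector of any degree-$\le 2$ function on $\F^m$ has even weight (so that $\bv^*_{i_1,i_2}$ really is the full evaluation of $f_{i_1,i_2}$, with the extension coordinate sitting at $\bo$), and that complementation $w\mapsto 2^m-w$ fixes the weight pair, dispose of the bookkeeping cleanly.

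One caveat you should make explicit rather than leave implicit: the literal dual $({\cal H}^*_m)^{\perp}$ is the first-order Reed--Muller code and therefore \emph{contains} the all-one vector. Under that reading the cosets for $\{i_1,i_2\}=\{1,3\}$ and $\{0,2\}$ coincide and both have weight set $\{0,2^{m-1},2^m\}$, contradicting the last two bullets of the statement. The lemma only comes out as written if, as in your first paragraph, one takes the ambient code to be the span of the (extended) rows of $H_m$ alone, i.e.\ the evaluations of linear forms without constant term. Your proof tacitly switches to this convention in the second paragraph (treating $\mathbf{1}$ as a nontrivial coset representative); state that choice once at the outset and the argument is complete. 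The first bullet is unaffected either way, since for a bent representative the weights $2^{m-1}\pm2^{m/2-1}$ are stable under adding any affine function.
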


Note that it is not the same to extend the code $\mathcal{C}_{i_1,i_2}$ or to add a new row $v_{i_1,i_2}$ to the parity check matrix of the extended code $\CC^*$. The next lemma will show us the difference.

\begin{lemma}\label{ext}
Let $i_1-i_2 \equiv 1 \pmod{2}$. We have that $(\mathcal{C}_{i_1,i_2})^* = (\CC^*)_{i_1+1,i_2+1}$ if and only if $0\notin\{i_1,i_2\}$.
\end{lemma}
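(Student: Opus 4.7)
The plan is to compare parity-check matrices of the two codes and reduce the equality to a single span condition that collapses to a well-known fact about the simplex code.

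\textbf{Step 1 (the indicator row for the extended code).} The columns of $H_m^*$ are $(\bh_i,1)^T$ for $i=0,\ldots,n-1$, of weight $\wt(\bh_i)+1$, together with $(\bo,1)^T$ at the overall-parity coordinate, of weight $1$. Let $\bw_{j_1,j_2}$ denote the length-$2^m$ indicator of the columns of $H_m^*$ whose weight is $\equiv j_1$ or $j_2 \pmod 4$, i.e.\ the row attached to $H_m^*$ in defining $(\CC^*)_{j_1,j_2}$. Taking $(j_1,j_2)=(i_1+1,i_2+1)$, the condition $\wt(\bh_i)+1\equiv i_1+1$ or $i_2+1 \pmod 4$ reduces to $\wt(\bh_i)\equiv i_1$ or $i_2 \pmod 4$, and the last coordinate of $\bw_{i_1+1,i_2+1}$ is $1$ iff $1\equiv i_1+1$ or $i_2+1 \pmod 4$, equivalently $0\in\{i_1,i_2\}$. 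Setting $\delta=1$ when $0\in\{i_1,i_2\}$ and $\delta=0$ otherwise, this yields $\bw_{i_1+1,i_2+1}=(\bv_{i_1,i_2},\delta)$.

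\textbf{Step 2 (reduction to a span problem).} A natural parity-check matrix of $(\CC_{i_1,i_2})^*$ has the rows of $(H_m\mid\bo^T)$, the row $(\bv_{i_1,i_2},0)$, and the overall-parity row $(1,\ldots,1)$ of length $2^m$; a natural parity-check matrix of $(\CC^*)_{i_1+1,i_2+1}$ has the rows of $(H_m\mid\bo^T)$, the same overall-parity row, and the extra row $(\bv_{i_1,i_2},\delta)$. Both rowspaces contain the subspace $W$ spanned by $(H_m\mid\bo^T)$ together with the overall-parity row, so the two codes coincide iff the two extra rows lie in the same coset of $W$, equivalently iff $(\bo,\delta)\in W$.

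\textbf{Step 3 (the simplex-code argument).} If $\delta=0$ this is automatic. If $\delta=1$, any expression of $(\bo,1)$ as a combination of rows of $(H_m\mid\bo^T)$ and the overall-parity row forces the latter to appear an odd number of times (from the last coordinate), so the first $n$ coordinates give the all-ones length-$n$ vector as a combination of rows of $H_m$. But the rowspace of $H_m$ is the simplex code $S_m$, whose nonzero codewords all have weight $2^{m-1}$, while the all-ones vector has weight $2^m-1$; this is impossible for $m\ge 2$. Hence $0\in\{i_1,i_2\}$ forces the two codes to differ.

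The main subtlety is Step~1: one must distinguish the length-$2^m$ indicator $\bw_{i_1+1,i_2+1}$ attached to $H_m^*$ from the paper's $\bv_{i_1,i_2}^*$ (the parity-extension of $\bv_{i_1,i_2}$), and recognize that the index shift $(i_1,i_2)\mapsto(i_1+1,i_2+1)$ exactly compensates for the weight shift caused by the extra all-ones row of $H_m^*$, leaving a single distinguishing bit in the last coordinate. Once this identification is made, the rest is an elementary linear-algebra check.
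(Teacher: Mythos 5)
Your proof is correct and takes essentially the same route as the paper: both arguments compare the parity-check matrices and identify the indicator row attached to $H_m^*$ for the shifted indices $i_1+1,i_2+1$ with $(\bv_{i_1,i_2},\delta)$, where the bit $\delta$ at the parity coordinate is $1$ exactly when $0\in\{i_1,i_2\}$. Your Steps 2 and 3 additionally make rigorous the ``only if'' direction --- reducing equality of the codes to whether $(\bo,1)$ lies in the row space of $H_m^*$ and ruling that out via the simplex code --- a point the paper's proof leaves implicit.
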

\begin{proof}
Adding the row $v_{i_1,i_2}$ given by (\ref{vi1i2}) to matrix $H_m$ we obtain a parity check matrix for $\CC_{i_1,i_2}$. Extending this code we obtain the same code as the one obtained after adding the row $v_{i_1+1,i_2+1}$ to the parity check matrix $H_m^*$ (matrix $H^*_m$ is obtained from $H_m$ adding a zero column and, later, the all-one row). The point is that this row $v_{i_1+1,i_2+1}$ is of even weight (Lemma~\ref{weights}) and so, has a zero in the parity check position if and only if $1\notin \{i_1+1,i_2+1\}$ or the same, if $0\notin \{i_1,i_2\}$.
\end{proof}

From \cite{bor3} we know that the code
$\mathcal{C}^*_{i_1,i_2}$ is completely regular if and only if
$0\notin\{i_1,i_2\}$. In this case, since Lemma~\ref{ext}, is the same to refer to the extension of $\CC_{i_1,i_2}$ or to refer to $(\CC^*)_{i_1+1,i_2+1}$. Furthermore, the automorphism group of the extension depends on this situation.

\begin{theo}\label{autext}
Let $i_1-i_2 \equiv 1 \pmod{2}$ and $0\notin\{i_1,i_2\}$. Let $\CC^*$ be the extended code of $\CC=\CC_{i_1,i_2}$. Then:
 \begin{enumerate}
 \item[(i)] $\Aut(\mathcal{C}^*) = \Aut(\mathcal{C})\ltimes \F^m$.
 \item[(ii)] The order of $\Aut(\mathcal{C}^*)$ is:
\[
(2^m-1){\cdot}(2^{m-1}){\cdot}(2^{m-2}-1){\cdot}(2^{m-3})\cdots
(2^2-1){\cdot}(2)\cdot 2^m.
\]
 \end{enumerate}
\end{theo}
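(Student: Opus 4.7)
The plan is to present $\Aut(\CC^*)$ as the semidirect product of a subgroup $T\cong\F^m$ of coordinate translations with the stabiliser of the extended coordinate, which I expect to coincide with $\Aut(\CC)$ of Theorem~\ref{theo:3.0}. To this end I label the $2^m$ coordinates of $\CC^*$ by the elements of $\F^m$, with the extended (parity) position labelled by $\bo$ and the remaining $n$ positions labelled by the columns of $H_m$. By Lemma~\ref{ext}, the parity check matrix of $\CC^*$ is $H^*_m$ together with one extra row $\bv^*_{i_1+1,i_2+1}$ which represents the quadratic boolean function $f=f_{i_1+1,i_2+1}$ from (\ref{theo:2.2}).

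First I would exhibit the translation subgroup. For each $\ba\in\F^m$, let $t_\ba$ be the coordinate permutation $\bx\mapsto\bx+\ba$. Expanding the quadratic form yields
\[
f(\bx+\ba)=f(\bx)+\bx(Q+Q^T)\ba^T+c_\ba,
\]
so the permuted row $t_\ba\bv^*$ differs from $\bv^*$ by a linear functional of $\bx$ plus a constant. Since the $m$ coordinate-reading rows of $H^*_m$ generate all linear functionals on $\F^m$ and the all-ones row supplies constants, this difference lies in the row space of $H^*_m$; hence $t_\ba$ preserves the row space of the parity check matrix of $\CC^*$ and lies in $\Aut(\CC^*)$. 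The subgroup $T=\{t_\ba:\ba\in\F^m\}$ has order $2^m$ and acts regularly on the $2^m$ coordinates.

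Next I would identify $\Stab_{\Aut(\CC^*)}(\bo)=\Aut(\CC)$: any $\pi\in\Aut(\CC^*)$ fixing $\bo$ restricts to a permutation of the $n$ original positions, and since $\CC^*$ is the overall-parity extension of $\CC$, this restriction lies in $\Aut(\CC)$; conversely, every element of $\Aut(\CC)$ extends (fixing $\bo$) to an automorphism of $\CC^*$. Orbit--stabiliser combined with the regular action of $T$ then gives $|\Aut(\CC^*)|=2^m\,|\Aut(\CC)|$, which together with Theorem~\ref{theo:3.0} establishes (ii). For (i), $T\cap\Aut(\CC)=\{1\}$ (only the trivial translation fixes $\bo$), so the order count forces $\Aut(\CC^*)=T\cdot\Aut(\CC)$, and normality of $T$ follows from $A\,t_\ba\,A^{-1}=t_{A\ba}\in T$, since by Theorem~\ref{theo:3.0} the group $\Aut(\CC)=Sp(m,2)$ acts linearly on $\F^m$ (and fixes $\bo$).

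The main obstacle is the translation step: verifying $t_\ba\in\Aut(\CC^*)$ rests on the quadratic-form identity above \emph{and} on the observation that both correction terms actually live inside the row space of $H^*_m$ rather than producing a genuinely new relation. The hypothesis $0\notin\{i_1,i_2\}$ is essential here, since via Lemma~\ref{ext} it is exactly what identifies $\CC^*$ with $(\CC^*)_{i_1+1,i_2+1}$ and guarantees that the extra row is a pure quadratic-type function on the whole of $\F^m$; without it the extra row would be twisted by the parity coordinate and the translation argument would break down.
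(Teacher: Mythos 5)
Your proof follows essentially the same route as the paper's: exhibit the translation subgroup $T\cong\F^m$ inside $\Aut(\CC^*)$, identify the stabiliser of the extended coordinate with $\Aut(\CC)$, and conclude the semidirect product structure and the order count. If anything, your verification that each $t_\ba$ preserves the extra quadratic row (via $f(\bx+\ba)=f(\bx)+\mbox{linear}+\mbox{constant}$, both absorbed into the row space of $H^*_m$) is spelled out more explicitly than in the paper, which checks only the extended-Hamming parity conditions and leaves the quadratic row implicit.
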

\begin{proof}
Let $\bh_1,\cdots,\bh_n \in \F^m$ be the columns of $H_m$, where $n=2^m-1$, and let $\bh_0$ be the zero vector in $\F^m$. Vector $\bh_i$ represent the $i$th coordinate positions of the codewords in $\CC$ and also in $\CC^*$ (assuming that the parity check position corresponds to vector $\bh_0$). For any $\bv\in F^m$, let $T_\bv: \F^m \longrightarrow \F^m$ be the translation on $\F^m$ defined by $T_\bv(\bx)=\bx+\bv$, for any $\bx\in \F^m$. We can also think of $T_\bv$ as acting on $\CC^*$ by permuting the coordinates of the codewords in $\CC^*$. The $i$th coordinate goes to the $j$th coordinate, such that $\bh_j=\bv+\bh_i$.
 As all the codewords in $\CC^*$ have even weight it is clear that $T_\bv$ is in $\Aut(\CC^*)$. Indeed, let $\ba=(a_0,\ldots,a_n)\in \CC^*$. This means that $\sum_{i=0}^n a_i \bh_i =\bo$. Now, $\sum_{i=0}^n a_i (\bh_i+\bv) =
 \sum_{i=0}^n a_i \bv =\bo$ and $T_\bv(\ba)\in \CC^*$.

 Furthermore, $T_m=\{T_\bv\,:\,\bv\in \F^m\}$ is a normal subgroup in $\Aut(\CC^*)$. Indeed, for any $\phi\in \Aut(\CC^*)$ we have that $\phi T_\bv\phi^{-1}$ is again a translation $T_\bw$, where $\bw=\phi(\bv)$. For any element $\alpha\in\Aut(\CC^*)$, it is clear that we can find $\alpha'\in\Aut(\CC^*)$ fixing the extended coordinate and a vector $\bu\in \F^m$, such that $\alpha=\alpha'T_{\bu}$. Therefore, we have $\Aut(\CC^*)/T_m \cong \Aut(\CC)$ and so $\Aut(\CC^*)$ is the semidirect product of $\F^m$ and $\Aut(\CC)$ (obviously, we can identify $T_m$ with $\F^m$). The first statement is proven.

The second statement is a direct consequence of Theorem~\ref{theo:3.0} and $|\F^m|=2^m$.
\end{proof}

\begin{theo}\label{theo:3.3}
Let $\mathcal{C}^*_{i_1,i_2}$ be the $[n+1, n - m - 1, 4]$ code of
length $n+1 = 2^m$ obtained by extending
$\mathcal{C}_{i_1,i_2}$. Then, $\mathcal{C}^*_{i_1,i_2}$ is
completely transitive with $\rho=4$ and intersection array $(n+1, n,
\frac{n+1}{2}, 1; 1, \frac{n+1}{2}, n, n+1)$ if and only if
$0\notin\{i_1,i_2\}$.
\end{theo}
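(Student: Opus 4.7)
The plan is to prove both directions. For the "only if" direction, observe that complete transitivity implies complete regularity, and by \cite{bor3} the extended code $\mathcal{C}^*_{i_1,i_2}$ is completely regular with the stated covering radius and intersection array if and only if $0 \notin \{i_1, i_2\}$; this gives the necessity.

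For the "if" direction, assume $0 \notin \{i_1, i_2\}$. From the intersection array (established in \cite{bor3}), the standard formula $k_i = b_0 \cdots b_{i-1}/(c_1 \cdots c_i)$ yields the coset counts $1,\,n+1,\,2n,\,n+1,\,1$ at distances $0,1,2,3,4$ respectively. Distances $0$ and $4$ are trivial, so one must show $\Aut(\mathcal{C}^*)$ acts transitively on the weight-$1$, weight-$2$, and weight-$3$ classes. By Lemma \ref{ext} we may write $\mathcal{C}^*_{i_1,i_2} = (\mathcal{C}^*)_{i_1+1, i_2+1}$, whose parity-check matrix has the all-ones row, the $m$ rows of $H_m^*$, and an extra row given by the quadratic form $f = f_{i_1+1, i_2+1}$ on $\F^m$. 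By Theorem \ref{autext}, $\Aut(\mathcal{C}^*) = \Aut(\mathcal{C}) \ltimes T_m$, where $T_m$ acts on coordinates by translation.

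The strategy is to parametrize cosets by syndromes $(s_1, \bu, \beta) \in \F \times \F^m \times \F$ and track the action of $\Aut(\mathcal{C}^*)$. Using the symplectic identity $\mathcal{B}(\bu,\bv) = f(\bu+\bv) + f(\bu) + f(\bv) + \epsilon$, a direct computation gives
\begin{equation*}
T_\bv \colon (s_1, \bu, \beta) \;\longmapsto\; \bigl(s_1,\; \bu + s_1\bv,\; \beta + s_1(f(\bv)+\epsilon) + \mathcal{B}(\bu,\bv)\bigr).
\end{equation*}
The weight-$1$ cosets correspond to syndromes $(1, \bh_l, f(\bh_l))$ with $l \in \{0,\ldots,n\}$ and the weight-$3$ cosets to $(1, \bw, f(\bw)+1)$ with $\bw \in \F^m$; in both cases $T_\bv$ induces $\bw \mapsto \bw + \bv$ on the middle coordinate (the third coordinate being forced by the identity above), so translations alone already act transitively on each of these $n+1$-element sets.

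For weight-$2$ cosets $(0, \bu, \gamma)$ with $\bu \neq \bo$, the formula shows that $T_\bv$ fixes $\bu$ and shifts $\gamma$ by $\mathcal{B}(\bu,\bv)$; non-degeneracy of $\mathcal{B}$ together with $\bu \neq \bo$ ensures any shift of $\gamma$ is realizable. To move $\bu$ itself, we invoke $\Aut(\mathcal{C}) = Sp(m,2)$ (Theorem \ref{theo:3.0}), which is transitive on $\F^m \setminus \{\bo\}$: each $K \in Sp(m,2)$ sends $(0,\bu,\gamma) \mapsto (0, K\bu, \gamma + L_K(\bu))$ for some linear $L_K$ (arising because $f\circ K - f$ is linear, a consequence of $K$ preserving $\mathcal{B}$), and any residual discrepancy in $\gamma$ is then corrected by a translation. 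This yields a single orbit of size $2n$. The main obstacle is exactly this weight-$2$ step, since neither factor of the semidirect product is on its own transitive on the $2n$ cosets (translations fix $\bu$, symplectic maps leave $\gamma$ tied to $\bu$ via $L_K$); the argument only closes once both halves are combined.
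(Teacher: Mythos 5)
Your proposal is correct, and while it shares the paper's skeleton (necessity from the complete-regularity criterion of \cite{bor3}; sufficiency via the decomposition $\Aut(\CC^*)=\Aut(\CC)\ltimes T_m$ of Theorem~\ref{autext}), the core of the if-direction runs along a genuinely different track. The paper leans on Theorem~\ref{theo:3.1}: since $\CC$ is already completely transitive and $\Aut(\CC)\subset\Aut(\CC^*)$, the cosets $(0|\bv)+\CC^*$ of a fixed inner weight form one block and the cosets $(1|\bv')+\CC^*$ another, so all that remains is a single bridge between the two blocks inside each weight class; this is supplied by choosing $\supp(\bv)=\supp(\bv')\cup\{i\}$ and applying the one translation $T_{\bh_i}$. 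You instead rebuild transitivity from scratch through the syndrome calculus $(s_1,\bu,\beta)$. That buys two things: for the odd-weight classes the translations alone are seen to act simply transitively, with no appeal to $\Aut(\CC)$, and for the weight-$2$ class the mechanism behind the single orbit of size $2n$ becomes visible --- nondegeneracy of $\mathcal{B}$ (which holds since $m$ is even) lets translations flip $\beta$, transitivity of $Sp(m,2)$ on $\F^m\setminus\{\bo\}$ moves $\bu$, and, as you note, neither factor alone suffices. The cost is the bookkeeping the paper avoids by quoting Theorem~\ref{theo:3.1}: you must check that the quadratic identity $f(\bu+\bv)=f(\bu)+f(\bv)+\mathcal{B}(\bu,\bv)+\epsilon$ holds for the actual extra row as a function on all of $\F^m$, including the parity coordinate $\bh_0=\bo$, and you must be careful that $f_{i_1+1,i_2+1}$ evaluated on the columns $(1|\bh_j)$ of $H^*_m$ is the same Boolean function as $f_{i_1,i_2}$ evaluated on $\bh_j$ (so the relevant $\epsilon$ is the one for $\{i_1,i_2\}$, namely $\epsilon=0$ since $0\notin\{i_1,i_2\}$). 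These details do check out, precisely because Lemma~\ref{ext} forces the extra row to vanish at the parity position, so your argument closes; it is simply longer and more computational than the paper's, in exchange for being self-contained and more explanatory about the weight-$2$ orbit.
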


\begin{proof} From \cite{bor3} we know that the code
$\mathcal{C}^*_{i_1,i_2}$ is completely regular if and only if
$0\notin\{i_1,i_2\}$ and also we know the intersection array for these cases.
Therefore, if this condition is not satisfied, the code is
not completely regular and neither completely transitive.
Hence, we have to prove that the completely regular code
$\mathcal{C}^* = \mathcal{C}^*_{i_1,i_2}$ is completely
transitive. To do so, we prove that all the cosets with the same minimum weight are in the same orbit by the action of $\Aut(\CC^*)$.

The number of cosets of $\CC^*$ are twice the cosets of $\CC$. Let $0$ mean
the parity check position, i.e.,
$J^* = \{0\} \cup J$. If $\bv+\CC$ is a coset of $\CC$, where $\bv$ is a representative vector of minimum weight then $(0|\bv)+\CC^*$ and $(1|\bv)+\CC^*$ are cosets of $\CC^*$.  There is only one coset of $\mathcal{C}^*$  of weight $4$, namely $(1|\bv)+\CC^*$, where $\bv+\CC$ is the only coset of weight three in $\CC$. Clearly this coset is fixed under the action of $\Aut(\mathcal{C}^*)$.

Now consider the cosets of $\mathcal{C}^*$ of weight $r\in\{1,2,3\}$.  They are of the form $(0|\bv)+\CC^*$, where $\bv+\CC$ is a coset of weight $r$ of $\CC$ and of the form $(1|\bv)+\CC^*$, where $\bv+\CC$ is a coset of weight $r-1$ of $\CC$. Cosets of the same minimum weight in $\CC$ can be moved among them by $\Aut(\CC)$ and so, as $\Aut(\CC)\subset \Aut(\CC^*)$ we need only to show that there exist an automorphism in $\Aut(\CC^*)$ moving $(0|\bv)+\CC^*$ to $(1|\bv')+\CC^*$, where $\bv,\bv'$ are at distance $r$ and $r-1$ from $\CC$, respectively. We further assume that $\supp(\bv')\subset \supp(\bv)$ and so, $\supp(\bv)=\supp(\bv')\cup \{i\}$, for some $i\in J$. The existence of the wanted automorphism is straightforward from Theorem~\ref{autext}, The automorphism $T_{\bh_i}$ (see the proof of Theorem~\ref{autext}) moves $(0|\bv)+\CC^*$ to $(1|\bv'')+\CC^*$, where $\supp(\bv'')=\{j+i\,:\,j\in \supp(\bv')\}$ and, finally, by using an automorphism from $\Aut(\CC)$ we can move from $\bv''+\CC$ to $\bv'+\CC$.
\end{proof}

\section{On antipodal distance transitive coset graphs\\ of diameter $3$ and $4$}\label{sec:DTG}

Denote by $\Gamma_{i_1,i_2}$ (respectively, $\Gamma^*_{i_1,i_2}$)
the coset graph, obtained from the code $\mathcal{C}_{i_1,i_2}$
(respectively $\mathcal{C}^*_{i_1,i_2}$) by Lemma \ref{lem:2.5}.
From Theorems \ref{theo:3.2} and \ref{theo:3.3} we obtain the
following result, which gives a new description, as coset graphs, of some
known graphs.

The next theorem was stated in~\cite{bor3} without explaining the property of transitivity of such graphs that we include here.

\begin{theo}\label{grafs}
For any even $m$, ~$m\geq 4$ there exist imprimitive and antipodal
distance transitive coset graphs $\Gamma_{0,1}$, ~$\Gamma_{1,2}$ with $v=2^{m+1}$ vertices and
$\Gamma^*_{1,2}$ with $v=2^{m+2}$ vertices. Specifically:
\begin{itemize}
\item $\Gamma_{0,1}$ has the intersection array $(n,
\frac{n-3}{2}, 1;1, \frac{n-3}{2}, n).$
\item $\Gamma_{1,2}$ has
the intersection array $(n, \frac{n+1}{2}, 1;1, \frac{n+1}{2},
n).$ \item $\Gamma^*_{1,2}$ has the intersection array
$(n+1,n,\frac{n+1}{2},1;1,\frac{n+1}{2},n,n+1).$
\item The graphs $\Gamma_{0,1}$ and $\Gamma_{1,2}$ are $Q$-polynomial.
\end{itemize}
\end{theo}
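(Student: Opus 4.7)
My plan is to deduce each claim from the results already established in Section~\ref{sec:CTC}, handling the $Q$-polynomial property separately at the end.

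Distance transitivity and the listed intersection arrays are immediate. Theorems~\ref{theo:3.2} and~\ref{theo:3.3} establish that $\CC_{0,1}$, $\CC_{1,2}$ and $\CC^*_{1,2}$ are completely transitive with the displayed intersection arrays, so Lemma~\ref{lem:2.5} promotes this to the statement that their coset graphs are distance transitive with the same intersection arrays. The vertex counts come from counting cosets: since $\dim \CC_{i_1,i_2}=n-m-1$, there are $2^{n}/2^{n-m-1}=2^{m+1}$ cosets in $\CC_{i_1,i_2}$, and $\CC^*_{1,2}$ has twice as many, giving $2^{m+2}$.

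Antipodality, and hence imprimitivity, reduces to a direct check on the intersection arrays through the standard criterion that a distance regular graph of diameter $\rho$ is antipodal iff $b_i=c_{\rho-i}$ for all $i\ne\rho/2$. For $\Gamma_{0,1}$ and $\Gamma_{1,2}$ one reads off $b_0=n=c_3$, $b_1=c_2$ and $b_2=1=c_1$; for $\Gamma^*_{1,2}$ one reads off $b_0=n+1=c_4$, $b_1=n=c_3$ and $b_3=1=c_1$ (the middle index $i=2$ is unconstrained). Antipodality of a distance regular graph of diameter at least $2$ forces imprimitivity, since the distance-$\rho$ graph $\Gamma_\rho$ is then a disjoint union of cliques on the antipodal classes and hence disconnected.

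The main work is the $Q$-polynomial assertion for $\Gamma_{0,1}$ and $\Gamma_{1,2}$. Both graphs have $v=2^{m+1}=2(k+1)$ with $k=n$, so each is an antipodal double cover of the complete graph $K_{2^m}$, i.e., a Taylor graph in the sense of~\cite{bro2}. I would invoke the classical fact that any Taylor graph is $Q$-polynomial, which stems from the correspondence between such graphs and regular two-graphs (see~\cite{bro2}, Chapters~1 and~13). If one prefers an explicit verification, the diameter-$3$ intersection matrix is only $4\times 4$, so one diagonalizes it to obtain closed-form eigenvalues $n,\theta_1,\theta_2,-1$, computes the dual eigenmatrix $Q$, and checks that under the natural ordering of dual eigenvalues the $j$-th column of $Q$ is a polynomial of degree $j$ in the second column -- equivalently, that the Krein parameters $q^{h}_{ij}$ vanish outside the tridiagonal range. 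The technical hurdle is precisely this dual-eigenvalue computation, but the explicit integer parameters above make it mechanical and confirm the $Q$-polynomial ordering for both $\Gamma_{0,1}$ and $\Gamma_{1,2}$.
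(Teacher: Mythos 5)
Your proposal is correct and follows essentially the same route as the paper: the paper also derives distance transitivity and the intersection arrays from Theorems~\ref{theo:3.2} and~\ref{theo:3.3} via Lemma~\ref{lem:2.5}, and settles the $Q$-polynomial claim by the characterization (BCN, Prop.~8.2.3) that an antipodal diameter-$3$ distance regular graph is $Q$-polynomial exactly when its array has the form $(k,\mu,1;1,\mu,k)$ --- the same fact underlying your Taylor-graph argument. The only difference is that you verify antipodality directly from the intersection numbers, whereas the paper obtains it by identifying the graphs with known antipodal $2$-covers of $K_{2^m}$ and Hadamard graphs; both are sound.
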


All the graphs $\Gamma_{0,1}$, $\Gamma_{1,2}$ and $\Gamma^*_{1,2}$
are known. The first two graphs provide antipodal distance regular
$2$-covers $(2^m,2,2^{m-1}-2)$ and $(2^m,2,2^{m-1})$ \cite{god},
respectively, of the complete graph $K_{2^m}$~\cite[Sec. 12.5]{bro2}).
The graph $\Gamma_{1,2}$ has been constructed by Thas \cite{tha}. His
construction involves a nondegenerate quadric in $PG(n,2)$. The equivalent
construction using a nondegenerate symplectic form has been done by
Brouwer \cite{bro2,god}.

Since any distance regular graph on $2(k+1)$ vertices with intersection
array $(k,\mu,1;1,\mu,k)$ is a Taylor graph~\cite{tai}, both graphs $\Gamma_{0,1}$
and $\Gamma_{1,2}$ belong to this class. The graphs $\Gamma_{0,1}$ and
$\Gamma_{1,2}$ are exactly the case (iv) in \cite[Sec. 7.6.C]{bro2},
where all distance transitive Taylor graphs are given.

The graph $\Gamma^*_{1,2}$ is a Hadamard graph of
order $n+1$. This graph is unique in the sense that any distance regular
graph with such parameters is a Hadamard graph~\cite[Sec. 1.8]{bro2}. It is also known that these graphs are distance transitive \cite[Th. 7.6.5]{bro2}.

It is well known \cite[Prop. 8.2.3]{bro2} that
an antipodal distance regular graph of diameter $\rho=3$ is
$Q$-polynomial if and only if it has an intersection array
$(k, \mu, 1; 1, \mu, k)$.

\end{document}